\documentclass[12pt,a4paper,oneside]{amsart}

\usepackage{amsfonts, amsmath, amssymb, amsthm, amscd, hyperref, textcomp}
\usepackage{anysize}
\marginsize{2cm}{2cm}{2cm}{2cm}
\usepackage{graphicx}

\newtheorem{theorem}{Theorem}[section]
\newtheorem{lemma}[theorem]{Lemma}

\newtheorem{corollary}[theorem]{Corollary}

\theoremstyle{definition}

\theoremstyle{remark}
\newtheorem{remark}[theorem]{Remark}

\numberwithin{equation}{section}

\DeclareMathOperator{\inte}{int}
\DeclareMathOperator{\conv}{conv}


\begin{document}

\title{On the Circle Covering Theorem by A.~W.~Goodman and R.~E.~Goodman}

\author{Arseniy~Akopyan}
\address{A.~Akopyan,
		Institute of Science and Technology Austria (IST Austria), Am Campus~1, 3400 Klosterneuburg, Austria}
\email{akopjan@gmail.com}

\author{Alexey~Balitskiy}
\address{A.~Balitskiy,
		Moscow Institute of Physics and Technology, Institutskiy per. 9, Dolgoprudny, Russia 141700, \newline
		Institute for Information Transmission Problems RAS, Bolshoy Karetny per. 19, Moscow, Russia 127994, \newline	
		Dept. of Mathematics, Massachusetts Institute of Technology, 182 Memorial Dr., Cambridge, MA 02142}
		\email{alexey\_m39@mail.ru}

\author{Mikhail~Grigorev}
\address{M.~Grigorev,
		Moscow Institute of Physics and Technology, Institutskiy per. 9, Dolgoprudny, Russia 141700,}
		\email{mikhail.grigorev@phystech.edu}

\maketitle

\begin{abstract}
In 1945, A.~W.~Goodman and R.~E.~Goodman proved the following conjecture by P.~Erd\H{o}s: Given a family of (round) disks of radii $r_1$, $\ldots$, $r_n$ in the plane, it is always possible to cover them by a disk of radius $R = \sum r_i$, provided they cannot be separated into two subfamilies by a straight line disjoint from the disks. In this note we show that essentially the same idea may work for different analogues and generalizations of their result. In particular, we prove the following: Given a family of positive homothetic copies of a fixed convex body $K \subset \mathbb{R}^d$ with homothety coefficients $\tau_1, \ldots, \tau_n > 0$ it is always possible to cover them by a translate of $\frac{d+1}{2}\left(\sum \tau_i\right)K$, provided they cannot be separated into two subfamilies by a hyperplane disjoint from the homothets.

\end{abstract}

\section{Introduction}

Consider a family $\mathcal{K}$ of positive homothetic copies of a fixed convex body $K \subset \mathbb{R}^d$ with homothety coefficients $\tau_1, \ldots, \tau_n > 0$. Following Hadwiger~\cite{hadwiger1947nonseparable}, we call $\mathcal{K}$ \emph{non-separable} if any hyperplane $H$ intersecting $\conv \bigcup \mathcal{K}$ intersects a member of $\mathcal{K}$. Answering a question by Erd\H{o}s, A.~W.~Goodman and R.~E.~Goodman~\cite{goodman1945circle} proved the following assertion:

\begin{theorem}[A.~W.~Goodman, R.~E.~Goodman, 1945]
\label{thm:goodman}
Given a non-separable family $\mathcal{K}$ of Euclidean balls of radii $r_1, \ldots, r_n$ in $\mathbb{R}^d$, it is always possible to cover them by a ball of radius $R = \sum r_i$.
\end{theorem}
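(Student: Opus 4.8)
The plan is to reduce the covering problem to a single point-location problem and then attack it variationally. Covering the family by a ball of radius $R=\sum r_i$ is equivalent to finding a center $O$ with $|O-c_i|\le R-r_i$ for every $i$, where $c_i$ denotes the center of the $i$-th ball; equivalently, to showing that the intersection $\bigcap_i \bar B(c_i,\,R-r_i)$ of the expanded balls is nonempty. I would introduce the convex function $f(x)=\max_i\bigl(|x-c_i|+r_i\bigr)$, whose minimum value $R^\ast=\min_x f(x)$ is exactly the radius of the smallest ball enclosing $\bigcup\mathcal K$. Everything then reduces to the sharp inequality $R^\ast\le R$.

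First I would record the optimality structure at a minimizer $O$ of $f$. Let $S=\{i:\ |O-c_i|+r_i=R^\ast\}$ be the active set, and for $i\in S$ let $p_i=c_i+r_i\frac{c_i-O}{|c_i-O|}$ be the farthest point of the $i$-th ball from $O$; it lies on the enclosing sphere $\partial\bar B(O,R^\ast)$. The condition $0$ belongs to the subdifferential of $f$ at $O$ yields nonnegative weights $\lambda_i$ with $\sum_{i\in S}\lambda_i=1$ and $O=\sum_{i\in S}\lambda_i p_i$; equivalently the outward unit vectors $u_i=(p_i-O)/R^\ast$ satisfy $\sum_{i\in S}\lambda_i u_i=0$, so the contact points are balanced around the center. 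By Carath\'eodory's theorem I may assume $|S|\le d+1$.

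Next I would bring in non-separability through its projection characterization (Hadwiger's viewpoint): the family is non-separable if and only if for every unit direction $w$ the intervals $I_k(w)=[c_k\cdot w-r_k,\ c_k\cdot w+r_k]$ have connected union, i.e. they cover $[\min_k(c_k\cdot w - r_k),\ \max_k(c_k\cdot w+r_k)]$ without a gap. In particular the width of $C=\conv\bigcup\mathcal K$ in every direction is at most $\sum_k 2r_k=2R$, so $\diam C\le 2R$. I note in passing that a tempting shortcut—merging two intersecting balls into one of radius $r_i+r_j$ and inducting on $n$—does not work, because a non-separable family need not contain any two members that meet (three pairwise disjoint equal disks at the vertices of a suitable equilateral triangle are already non-separable).

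Finally, the crux. Assuming for contradiction $R^\ast>R$, the goal is to exhibit a direction $w$ in which $\bigcup_k I_k(w)$ has a gap, i.e. a separating hyperplane. I expect this to be the main obstacle, because the width bound alone is too weak: combined with Jung's theorem it gives only $R^\ast\le R\sqrt{2d/(d+1)}$, strictly worse than $R^\ast\le R$ for $d\ge 2$. The sharp constant must therefore come from the balance relation $\sum_{i\in S}\lambda_i u_i=0$ together with the connectivity of the projections, not merely from the diameter. Concretely I would exploit that when $R^\ast>R$ the active centers are pushed outward, sitting at distance $R^\ast-r_i>\sum_{j\ne i}r_j$ from $O$, leaving a ball-free region near $O$; I would then try to thread a hyperplane through this region and out between consecutive contact directions, using the balance condition to ensure the contacts cannot crowd one side and that the escape directions are not blocked by the remaining balls. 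The one-dimensional case is the guiding intuition: there $C$ is a segment of length at most $2R$ with antipodal extreme contacts ($u_1=-u_2$), and $R^\ast>R$ literally opens a gap between the two outermost balls—precisely the separating point sought. Promoting this gap-finding argument from the balanced contact configuration in $\mathbb R^d$ to a genuine separating hyperplane, while controlling the inactive balls, is the step I expect to require the most care.
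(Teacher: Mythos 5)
Your proposal is not a proof: the crux --- deriving a separating hyperplane from the assumption $R^\ast > R$ --- is left as a program (``I would then try to thread a hyperplane\ldots'', ``the step I expect to require the most care''), and this is precisely where the sharp constant lives. Worse, the natural way to complete your plan falls quantitatively short. From non-separability in an active contact direction $u_i$ one gets that the union of the projected intervals is an interval of length at most $2R$ with right endpoint $R^\ast$, so every ball lies in the halfspace $\{x : \langle x - O, u_i\rangle \ge R^\ast - 2R\}$; averaging these inequalities against the balance relation $\sum_{i\in S}\lambda_i u_i = 0$ (evaluated at any point of any ball) yields only $R^\ast \le 2R$. The bound degrades to a configuration-dependent constant strictly bigger than $1$ as soon as the active contact directions are not antipodal (e.g.\ three contacts at pairwise angle $120^\circ$ give $R^\ast \le \tfrac43 R$ by this argument), so the balance condition at the Chebyshev center plus projection-connectivity, used this way, cannot reach the sharp factor $1$. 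You would need a genuinely new idea to close this, and none is supplied.

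The paper's proof sidesteps the variational setup entirely, and the idea you are missing is the explicit \emph{choice of center}: take $o = \frac{\sum r_i c_i}{\sum r_i}$, the center of mass with weights proportional to the radii, and claim that $\bar B(o, R)$ already covers the family. The engine is the one-dimensional Lemma~\ref{lem:segm}: if segments of lengths $\ell_1,\ldots,\ell_n$ have connected union, then the segment of length $\sum \ell_i$ centered at their \emph{weighted center of mass} covers the union. If $\bar B(o,R)$ missed some point $p \in \conv\bigcup\mathcal{K}$, a hyperplane $H$ would strictly separate $p$ from $\bar B(o,R)$; projecting everything onto the direction orthogonal to $H$, non-separability makes the projected intervals' union connected, the lemma says this union lies inside the length-$2R$ interval centered at $\pi(o)$ (which is exactly $\pi(\bar B(o,R))$), yet $\pi(p)$ lies outside it --- contradiction. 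Note why your route cannot borrow this: the lemma is a statement about the center of mass of the projected intervals, and projection commutes with the weighted centroid ($\pi(o) = \frac{\sum \ell_i \pi(c_i)}{\sum \ell_i}$ since $\ell_i \propto r_i$), whereas the Chebyshev center of the union does not project to the Chebyshev center of the projections. Guessing the centroid turns the whole problem into a one-line computation per direction; optimizing the center first destroys that structure.
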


Let us outline here the idea of their proof since we are going to reuse it in different settings.

First, A.~W.~Goodman and R.~E.~Goodman prove the following lemma, resembling the $1$-dimensional case of the general theorem:

\begin{lemma}
\label{lem:segm}
Let $I_1, \ldots, I_n \subset \mathbb{R}$ be segments of lengths $\ell_1, \ldots, \ell_n$ with midpoints $c_1, \ldots, c_n$. Assume the union $\bigcup I_i$ is a segment (i.e. the family of segments is non-separable). Then the segment $I$ of length $\sum \ell_i$ with midpoint at the center of mass $c = \frac{\sum \ell_i c_i}{\sum \ell_i}$ covers $\bigcup I_i$.
\end{lemma}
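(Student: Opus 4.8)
The plan is to reduce the statement to the case of two overlapping segments by an inductive merging procedure, and to settle that base case by a direct computation in which the non-separability hypothesis enters exactly once.

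First I would isolate the two-segment case as the heart of the matter: if $I_i=[c_i-\ell_i/2,\,c_i+\ell_i/2]$ and $I_j=[c_j-\ell_j/2,\,c_j+\ell_j/2]$ overlap, say with $c_i\le c_j$, then the segment $I'$ of length $\ell_i+\ell_j$ centred at the combined centre of mass $c'=\frac{\ell_i c_i+\ell_j c_j}{\ell_i+\ell_j}$ contains $I_i\cup I_j$. To see this I would check the two endpoints of $I_i\cup I_j$ separately. The inequality controlling the right end of the union reduces, after clearing denominators, to $c_j-c_i\le \tfrac12(\ell_i+\ell_j)$, which is precisely the condition $c_j-\ell_j/2\le c_i+\ell_i/2$ that the two segments meet; the left end is handled symmetrically (and turns out to be automatic). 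Thus the overlap is not merely sufficient but is exactly the piece of information needed.

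Next I would run induction on $n$. For $n=1$ there is nothing to prove. For $n\ge 2$, since $\bigcup I_i$ is a single segment, the intersection graph of the family is connected, so some two members $I_i,I_j$ overlap; I replace them by the single segment $I'$ described above. This merge keeps the length-weighted centre of mass $c$ and the total length $L=\sum\ell_i$ unchanged, and by the two-segment case $I'\supseteq I_i\cup I_j$, so the union of the new family contains the old union and is again a single segment (it is the union of the old interval with an interval meeting it). The new family has $n-1$ members with the same $c$ and the same $L$, so the induction hypothesis yields that the length-$L$ segment centred at $c$ covers the new union, hence a fortiori the original $\bigcup I_i$.

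The only genuine computation is the two-segment estimate, and the point to watch there is the observation that the overlap condition is exactly the inequality that appears, not something weaker; everything else is bookkeeping. The two things I would take care to verify are that merging preserves non-separability — that replacing a connected sub-union by a superset keeps the whole union connected — and that it never shrinks the union, so that covering the merged family automatically covers the original one.
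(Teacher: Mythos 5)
Your proof is correct. Note first that the paper itself does not prove Lemma~\ref{lem:segm}: it is quoted from Goodman--Goodman's 1945 paper, and the closest in-paper analogue is the proof of Lemma~\ref{lem:antisegm}, which proceeds by a direct global computation (subdivide by all endpoints, introduce multiplicities, and verify a single weighted inequality for each endpoint of $I$). Your route is genuinely different: you localize everything into a two-segment merging step and induct on $n$. The key verifications all check out: two overlapping segments $I_i, I_j$ are contained in the segment of length $\ell_i+\ell_j$ centred at $\frac{\ell_i c_i+\ell_j c_j}{\ell_i+\ell_j}$ precisely because $c_j-c_i\le\frac12(\ell_i+\ell_j)$; connectedness of $\bigcup I_i$ forces the intersection graph to be connected (two disjoint nonempty closed subsets cannot partition an interval), so an overlapping pair exists for $n\ge 2$; and the merge preserves both $L=\sum\ell_i$ and the weighted centre of mass while only enlarging the union and keeping it an interval, so the induction closes. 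One small imprecision: for each endpoint of the union there are two subcases (whether $I_i$ or $I_j$ sticks out further), and in each of the left and right estimates one subcase is automatic while the other uses the overlap condition --- so the left end is not entirely ``automatic'' as you parenthetically claim, but the symmetric computation goes through. The trade-off between the two approaches: the direct computation generalizes immediately to the multiplicity-$k$ setting of Lemma~\ref{lem:antisegm}, whereas your merging argument is more elementary and makes transparent exactly where non-separability is used, but does not obviously adapt to that ``dual'' version.
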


Next, for a family $\mathcal{K} = \{o_i + r_i B\}$ ($B$ denotes the unit ball centered at the origin of $\mathbb{R}^d$), A.~W.~Goodman and R.~E.~Goodman consider the point $o = \frac{\sum r_i o_i}{\sum r_i}$ (i.e., the center of mass of~$\mathcal{K}$ if the weights of the balls are chosen to be proportional to the radii). They project the whole family onto $d$ orthogonal directions (chosen arbitrarily) and apply Lemma~\ref{lem:segm} to show that the ball of radius $R = \sum r_i$ centered at $o$ indeed covers $\mathcal{K}$.

In~\cite{bezdek2016non}, K.~Bezdek and Z.~L\'angi show that Theorem~\ref{thm:goodman} actually holds not only for balls but also for any centrally-symmetric bodies:

\begin{theorem}[K.~Bezdek and Z.~Langi, 2016]
\label{thm:symm}
Given a non-separable family of homothets of centrally-symmetric convex body $K \subset \mathbb{R}^d$ with homothety coefficients $\tau_1, \ldots, \tau_n > 0$ it is always possible to cover them by a translate of $\left(\sum \tau_i\right)K$.
\end{theorem}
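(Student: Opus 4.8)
The plan is to reduce everything to the one-dimensional Lemma~\ref{lem:segm} by projecting in every direction, exactly as in the ball case, exploiting the fact that the central symmetry of $K$ turns each projection into a \emph{symmetric} segment. Write $\mathcal{K} = \{o_i + \tau_i K\}$ and $R = \sum \tau_i$, translate $K$ so that it is symmetric about the origin, and let $o = \frac{\sum \tau_i o_i}{\sum \tau_i}$ be the center of mass weighted by the coefficients. Since $K$ is centrally symmetric, its gauge $\|v\|_K = \min\{\lambda \ge 0 : v \in \lambda K\}$ is a genuine norm, so the triangle inequality yields the covering criterion
\[
o_i + \tau_i K \subseteq o + RK \iff \|o_i - o\|_K \le R - \tau_i = \sum_{j \ne i}\tau_j .
\]
Thus it suffices to prove $\|o_i - o\|_K \le \sum_{j\ne i}\tau_j$ for every $i$.

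To estimate this norm I would test it against an arbitrary support direction. Recall that $\|v\|_K = \max_{\xi \in K^\circ}\langle \xi, v\rangle$, the maximum being attained on $\partial K^\circ = \{\xi : h_K(\xi) = 1\}$. Fix such a $\xi$ and project along the functional $\langle \xi, \cdot\rangle$. Because central symmetry gives $h_K(\xi) = h_K(-\xi) = 1$, the image of $o_j + \tau_j K$ is the segment $I_j$ of length $2\tau_j$ centered at $c_j = \langle\xi, o_j\rangle$; these midpoints and lengths are precisely the data feeding Lemma~\ref{lem:segm}.

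Next I would check that the projected family $\{I_j\}$ is non-separable, i.e. that $\bigcup_j I_j$ is a single segment. Indeed, a gap would be witnessed by a value $t \notin \bigcup_j I_j$ lying strictly between two of the segments; the hyperplane $\{x : \langle\xi, x\rangle = t\}$ would then miss every homothet (as $t$ lies in no $I_j$) while meeting $\conv \bigcup \mathcal{K}$ (whose projection is the whole interval spanned by the $I_j$), contradicting non-separability of $\mathcal{K}$. Lemma~\ref{lem:segm} then shows that $\bigcup_j I_j$ is covered by the segment of length $\sum_j 2\tau_j = 2R$ centered at $\frac{\sum_j 2\tau_j c_j}{\sum_j 2\tau_j} = \langle\xi, o\rangle$. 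Containing $I_i$ in this segment gives $\langle\xi, o_i\rangle + \tau_i \le \langle\xi, o\rangle + R$, that is, $\langle\xi, o_i - o\rangle \le R - \tau_i$.

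Since this holds for every $\xi \in \partial K^\circ$, taking the maximum over $\xi$ yields $\|o_i - o\|_K \le R - \tau_i = \sum_{j\ne i}\tau_j$, which is exactly the covering criterion; hence $o + RK$ covers $\mathcal{K}$. The one delicate point is the preservation of non-separability under projection, which is where the geometric hypothesis is genuinely used; the rest is a direct transcription of the Goodman--Goodman scheme. It is worth emphasizing that central symmetry enters twice and is indispensable: it makes $\|\cdot\|_K$ a symmetric norm (so the triangle-inequality criterion holds) and it centers each projected segment at $\langle\xi, o_j\rangle$ (so that the single center of mass $o$ is simultaneously correct in every direction). For non-symmetric $K$ both features fail, which is precisely why the general statement announced in the abstract must carry the extra factor $\frac{d+1}{2}$.
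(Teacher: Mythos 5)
Your argument is correct and is essentially the direct (non-contradiction) form of the projection proof the paper attributes to Petrov: the same weighted center $o = \frac{\sum \tau_i o_i}{\sum \tau_i}$, the same reduction to Lemma~\ref{lem:segm} via one-dimensional projections, with the only cosmetic difference that you quantify over all support directions $\xi \in \partial K^\circ$ and assemble the resulting linear inequalities through the dual norm, rather than extracting a single separating hyperplane from an assumed failure. The key steps --- the covering criterion $\|o_i - o\|_K \le \sum_{j\ne i}\tau_j$, the symmetry of each projected segment about $\langle\xi, o_j\rangle$, and the preservation of non-separability under projection --- are all verified correctly.
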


The idea of their proof is to use Lemma~\ref{lem:segm} to deduce the statement for the case when $K$ is a hypercube, and then deduce the result for sections of the hypercube (which can approximate arbitrary centrally-symmetric bodies).

It is worth noticing that Theorem~\ref{thm:symm} follows from Lemma~\ref{lem:segm} by a more direct argument (however, missed by A.~W.~Goodman and R.~E.~Goodman). In 2001 F.~Petrov proposed a particular case of the problem (when $K$ is a Euclidean ball) to Open Mathematical Contest of Saint Petersburg Lyceum~\textnumero239 \cite{piter-olimpiadi2000-2002}. He assumed the following solution (working for any symmetric $K$ as well): For a family $\mathcal{K} = \{o_i + \tau_i K\}$, consider a homothet $\left(\sum \tau_i\right) K + o$ with center $o = \frac{\sum \tau_i o_i}{\sum \tau_i}$. If $\left(\sum \tau_i\right) K + o$ does not cover~$\mathcal{K}$, then there exists a hyperplane $H$ separating a point $p \in \conv \bigcup \mathcal{K} \setminus \left(\left(\sum \tau_i\right) K + o\right)$ from $\left(\left(\sum \tau_i\right) K + o\right)$. Projection onto the direction orthogonal to $H$ reveals a contradiction with Lemma~\ref{lem:segm}.

Another interesting approach to Goodmans' theorem was introduced by K.~Bezdek and A.~Litvak~\cite{bezdek2015packing}. They put the problem in the context of studying the packing analogue of Bang's problem through the LP-duality, which gives yet another proof of Goodmans' theorem for the case when $K$ is a Euclidean disk in the plane. One can adapt their argument for the original Bang's problem to get a ``dual'' counterpart of Goodmans' theorem. We discuss this counterpart and give our proof of a slightly more general statement in Section~\ref{sect:anti}.

The paper is organized as follows.

In Section~\ref{sect:nonsymm} we prove a strengthening (with factor $\frac{d+1}{2}$ instead of $d$) of the following result of K.~Bezdek and Z.~Langi:
\begin{theorem}[K.~Bezdek and Z.~Langi, 2016]
\label{thm:nonsymmweak}
Given a non-separable family of positive homothetic copies of a (not necessarily centrally-symmetric) convex body $K \subset \mathbb{R}^d$ with homothety coefficients $\tau_1, \ldots, \tau_n > 0$, it is always possible to cover them by a translate of $d\left(\sum \tau_i\right)K$.
\end{theorem}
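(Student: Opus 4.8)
The plan is to adapt F.~Petrov's projection argument (recalled above for symmetric bodies) to the asymmetric case, absorbing the asymmetry of $K$ through a classical bound on how far $K$ is from being centrally symmetric. Throughout, $h_K(u) = \max_{x \in K}\langle x, u\rangle$ denotes the support function and $w_K(u) = h_K(u) + h_K(-u)$ the width in direction $u$. First I would normalize the position of $K$ by translating it so that its centroid lies at the origin; then the classical Minkowski--Radon estimate guarantees $-K \subseteq dK$, i.e.\ $h_K(-u) \le d\,h_K(u)$ for every unit vector $u$ (with the simplex as the extremal body). For a family $\mathcal{K} = \{o_i + \tau_i K\}$ I would center the covering homothet $\lambda\left(\sum \tau_i\right)K + o$ at the weighted centroid $o = \frac{\sum \tau_i o_i}{\sum \tau_i}$ and seek the least admissible $\lambda$.

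Next I would argue by contradiction. If this homothet fails to cover $\mathcal{K}$, some point $p \in \conv\bigcup\mathcal{K}$ escapes it, and a separating hyperplane with unit normal $u$ gives $\langle p, u\rangle > \langle o, u\rangle + \lambda\left(\sum \tau_i\right)h_K(u)$. Projecting everything onto the line spanned by $u$, the images of the members are segments $I_i$ of lengths $\ell_i = \tau_i\,w_K(u)$ and midpoints $m_i = \langle o_i, u\rangle + \tfrac12\tau_i\bigl(h_K(u) - h_K(-u)\bigr)$. A short observation shows that projection preserves non-separability — any gap in $\bigcup_i I_i$ would be witnessed by a hyperplane orthogonal to $u$ meeting $\conv\bigcup\mathcal{K}$ but no member — so Lemma~\ref{lem:segm} applies and covers $\bigcup_i I_i$ by the segment of length $\left(\sum \tau_i\right)w_K(u)$ centered at $c = \frac{\sum \ell_i m_i}{\sum \ell_i}$. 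Since the projection of $p$ lies in $\bigcup_i I_i$, this yields $\langle p, u\rangle \le c + \tfrac12\left(\sum \tau_i\right)w_K(u)$.

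Comparing the two bounds for $\langle p, u\rangle$ is the crux. Writing $\rho = \frac{\sum \tau_i^2}{\left(\sum \tau_i\right)^2} \in (0,1]$ and using $\langle o, u\rangle = \frac{\sum \tau_i\langle o_i, u\rangle}{\sum \tau_i}$, the midpoint collapses to $c = \langle o, u\rangle + \tfrac{\rho}{2}\bigl(h_K(u)-h_K(-u)\bigr)\sum\tau_i$, and the inequality I need becomes, after dividing by $\sum \tau_i$,
\[
\lambda \;\ge\; \frac{1+\rho}{2} + \frac{1-\rho}{2}\cdot\frac{h_K(-u)}{h_K(u)}.
\]
The right-hand side is largest when $\rho \to 0$ (a swarm of tiny homothets) and the asymmetry ratio attains its extreme value $d$, so that $\lambda = \frac{d+1}{2}$ always suffices; the opposite endpoint of $I_i$ gives the mirror inequality with $-u$ in place of $u$ and the same bound. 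This contradicts the separation and in fact establishes the sharper constant $\frac{d+1}{2}$ announced in the abstract, from which Theorem~\ref{thm:nonsymmweak}, with its weaker constant $d$, follows a fortiori.

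The step I expect to be the main obstacle is the control of the asymmetry ratio $\frac{h_K(-u)}{h_K(u)}$. In the symmetric case this ratio is identically $1$ and the center can be chosen to respect every direction at once; here one must commit to a single center $o$ and pay for the directional discrepancy. Identifying the centroid as the correct normalization (so that the discrepancy never exceeds $d$) and verifying that the weight factor $\rho$ only helps rather than hurts — together with the routine but necessary check that non-separability survives projection — are the points that will require the most care.
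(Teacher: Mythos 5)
Your argument is correct and is essentially the paper's own route: it is Petrov's projection argument via Lemma~\ref{lem:segm}, with the asymmetry of $K$ absorbed by the Minkowski--Radon bound $-K\subseteq dK$ for the centroid normalization (the paper's Lemma~\ref{lem:asymm}), and your algebra, including the role of the weight factor $\rho$, matches the computation in the proof of Theorem~\ref{thm:nonsymm}. The only cosmetic difference is that you phrase everything with support functions and fix the centroid from the start, whereas the paper introduces the asymmetry parameter $\sigma$ and optimizes the reference point, recovering the same constant $\frac{d+1}{2}$ of Corollary~\ref{cor:nonsymmstrong}.
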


In Section~\ref{sect:simplex} we show that if we weaken the condition of non-separability considering only $d+1$ directions of separating hyperplanes, then factor $\frac{d+1}{2}$ cannot be improved.

In Section~\ref{sect:anti} we prove a counterpart of Goodmans' theorem related to the notion somehow opposite to non-separability: Given a positive integer $k$ and a family of Euclidean balls of radii $r_1, \ldots, r_n$ in $\mathbb{R}^d$, it is always possible to inscribe a ball of radius $r = \left(\sum r_i\right)/k$ within their convex hull, provided every hyperplane intersects at most $k$ interiors of the balls.

\section{A Goodmans-type result for non-symmetric bodies}
\label{sect:nonsymm}

Let $K \subset \mathbb{R}^d$ be a (not necessarily centrally-symmetric) convex body containing the origin and let $K^\circ = \{p : \langle p,q \rangle \le 1 \; \forall q \in K\}$ (where $\langle\cdot,\cdot\rangle$ stands for the standard inner product) be its polar body. We define the following \emph{parameter of asymmetry}:
\[
\sigma = \min\limits_{q\in \inte K} \min \{\mu>0: (K-q) \subset -\mu (K-q)\}
\]

It is an easy exercise in convexity to establish that $\min \{\mu>0: (K-q) \subset -\mu (K-q)\} = \min \{\mu>0: (K-q)^\circ \subset -\mu (K-q)^\circ\}$. So an equivalent definition (which is more convenient for our purposes) is
\[
\sigma = \min\limits_{q\in \inte K} \min \{\mu>0: (K-q)^\circ \subset -\mu (K-q)^\circ\}.
\]
The value $\frac{1}{\sigma}$ is often referred to as \emph{Minkowski's measure of symmetry} of body~$K$ (see, e.g.,~\cite{grunbaum1963measures}).


\begin{theorem}
\label{thm:nonsymm}
Given a non-separable family of positive homothetic copies of (not necessarily centrally-symmetric) convex body $K \subset \mathbb{R}^d$ with homothety coefficients $\tau_1, \ldots, \tau_n > 0$ it is always possible to cover them by a translate of $\frac{\sigma+1}{2}\left(\sum \tau_i\right)K$. (Here $\sigma$ denotes the parameter of asymmetry of $K$, defined above.)
\end{theorem}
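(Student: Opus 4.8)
The plan is to mimic Petrov's support-function argument, but with $K$ placed at its Minkowski center and with careful bookkeeping of the asymmetry. First I would translate $K$ so that the minimum in the definition of $\sigma$ is attained at $q=0$; this does not change any member $o_i+\tau_i K$ of the family (it only relabels the homothety centers $o_i$), and a translate of $\frac{\sigma+1}{2}\left(\sum\tau_i\right)K$ for the recentered body is still a translate of $\frac{\sigma+1}{2}\left(\sum\tau_i\right)K$. After this normalization $0\in\inte K$, so $h_K(u)>0$ for every $u\neq 0$, and the defining inclusion $K\subset-\sigma K$ turns into the support-function inequality $h_K(-u)\le\sigma\,h_K(u)$ for all $u$, which is the form of the asymmetry bound I will actually use.

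As the covering body I would take $C=\frac{\sigma+1}{2}\left(\sum\tau_i\right)K+o$ with $o=\frac{\sum\tau_i o_i}{\sum\tau_i}$, the $\tau$-weighted center of mass of the family. Since $\conv\bigcup\mathcal{K}$ and $C$ are convex, it suffices to check $h_{\conv\bigcup\mathcal{K}}(u)\le h_C(u)=\langle o,u\rangle+\frac{\sigma+1}{2}\left(\sum\tau_i\right)h_K(u)$ for every direction $u$. Fixing $u$ and projecting orthogonally onto the line $\mathbb{R}u$, the member $o_i+\tau_i K$ becomes the segment $I_i$ of length $\tau_i\bigl(h_K(u)+h_K(-u)\bigr)$ centered at $\langle o_i,u\rangle+\tfrac{\tau_i}{2}\bigl(h_K(u)-h_K(-u)\bigr)$. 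Non-separability of $\mathcal{K}$ guarantees that $\bigcup_i I_i$ is a single segment (a gap would produce a hyperplane orthogonal to $u$ separating the family), so Lemma~\ref{lem:segm} applies and places $\bigcup_i I_i$ inside a segment of length $\left(\sum\tau_i\right)\bigl(h_K(u)+h_K(-u)\bigr)$ centered at the center of mass $c(u)$ of the $I_i$. In particular its right endpoint, which equals $h_{\conv\bigcup\mathcal{K}}(u)=\max_i\bigl(\langle o_i,u\rangle+\tau_i h_K(u)\bigr)$, is at most $c(u)+\tfrac12\left(\sum\tau_i\right)\bigl(h_K(u)+h_K(-u)\bigr)$.

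Computing $c(u)$ explicitly and subtracting $\langle o,u\rangle$ (which cancels against the first term of $c(u)$ by the choice of $o$), the required inequality reduces to the scalar statement $t\cdot\tfrac12\bigl(h_K(u)-h_K(-u)\bigr)+\tfrac12\bigl(h_K(u)+h_K(-u)\bigr)\le\frac{\sigma+1}{2}h_K(u)$, where $t=\frac{\sum\tau_i^2}{(\sum\tau_i)^2}\in(0,1]$. Writing the left-hand side as $\tfrac12\bigl[(t+1)h_K(u)+(1-t)h_K(-u)\bigr]$ and using $1-t\ge 0$ together with $h_K(-u)\le\sigma h_K(u)$, I bound it by $\tfrac{h_K(u)}{2}\bigl[(1+\sigma)+t(1-\sigma)\bigr]\le\tfrac{1+\sigma}{2}h_K(u)$, the last step holding because $t>0$ and $\sigma\ge 1$. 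This is exactly what is needed, and it shows that $C$ covers $\conv\bigcup\mathcal{K}\supseteq\bigcup\mathcal{K}$.

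The main obstacle, and the real content beyond the symmetric case, is the scale-free quantity $t=\frac{\sum\tau_i^2}{(\sum\tau_i)^2}$ produced by the center of mass of the projected segments: one must notice that it is the regime $t\to 0$ (many comparably small homothets) that is extremal and forces precisely the factor $\frac{\sigma+1}{2}$, while the correct placement of $K$ at its Minkowski center is what converts the asymmetry into the usable estimate $h_K(-u)\le\sigma h_K(u)$. Since $\sigma\le d$ for every convex body, with equality for the simplex, this recovers the factor $\frac{d+1}{2}$ announced in the abstract, and collapses to Theorem~\ref{thm:symm} when $K$ is symmetric and $\sigma=1$.
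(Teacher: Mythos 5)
Your proof is correct and follows essentially the same route as the paper's: recenter $K$ at its Minkowski center, take the $\tau$-weighted centroid $o$, project onto each direction, apply Lemma~\ref{lem:segm}, and combine $\sum\tau_i^2\le\left(\sum\tau_i\right)^2$ with the asymmetry bound $h_K(-u)\le\sigma h_K(u)$ (the paper's ratio $1:s$, $s\in[1,\sigma]$, is the same information). The only cosmetic difference is that you verify the support-function inequality directly for every direction, whereas the paper argues by contradiction with a single separating hyperplane and checks the two endpoints $a'\ge a$, $b'\le b$ separately.
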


\begin{proof}
We start by shifting the origin so that $K^\circ \subset -\sigma K^\circ$.

For a family $\mathcal{K} = \{o_i + \tau_i K\}$, consider the homothet $\frac{\sigma+1}{2} \left(\sum \tau_i\right) K + o$ with center $o = \frac{\sum \tau_i o_i}{\sum \tau_i}$. Assume that $\frac{\sigma+1}{2}\left(\sum \tau_i\right) K + o$ does not cover $\mathcal{K}$, hence there exists a hyperplane $H$ (strictly) separating a point $p \in \conv \bigcup \mathcal{K} \setminus \left(\frac{\sigma+1}{2}\left(\sum \tau_i\right) K + o\right)$ from $\left(\frac{\sigma+1}{2}\left(\sum \tau_i\right) K + o\right)$. Consider the orthogonal projection $\pi$ along $H$ onto the direction orthogonal to $H$. Suppose the segment $\pi(K)$ is divided by the projection of the origin in the ratio $1:s$. Since $K^\circ \subset -\sigma K^\circ$, we may assume that $s \in [1, \sigma]$. Identify the image of $\pi$ with the coordinate line $\mathbb{R}$ and denote $I_i = [a_i, b_i] = \pi\left(o_i + \tau_i K\right)$, $c_i = \pi(o_i)$, $\ell_i = b_i - a_i$, $L = \sum \ell_i$ (see Figure~\ref{pic:nonsymm}). Note that the $\ell_i$ are proportional to the $\tau_i$, and that $s(c_i - a_i) = b_i - c_i$. Denote $c = \pi(o) = \frac{\sum \ell_i c_i}{L}$ and $I = [a,b] = \pi\left(\frac{\sigma+1}{2}\left(\sum \tau_i\right) K + o\right)$ the segment of length $\frac{\sigma+1}{2}L$ divided by $c$ in the ratio $1:s$.

\begin{figure}
\centering
\includegraphics{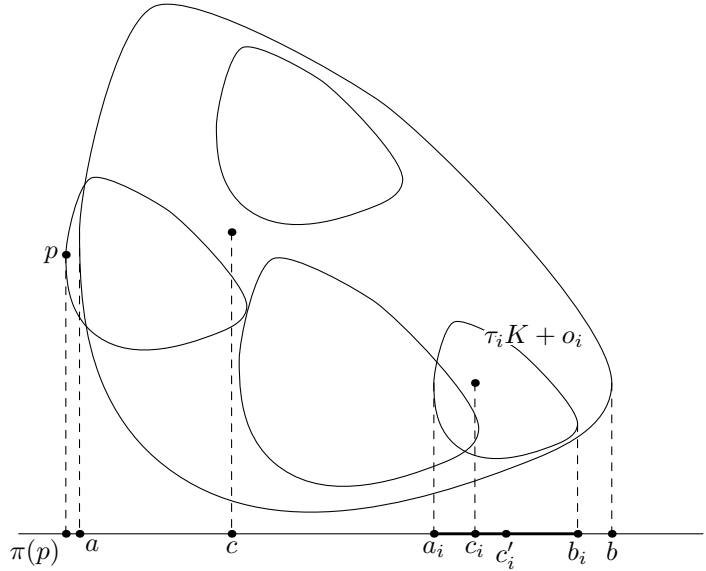}
\caption{Illustration of the proof of Theorem~\ref{thm:nonsymm}}
\label{pic:nonsymm}
\end{figure}

Also consider the midpoints $c_i' = \frac{a_i+b_i}{2}$. By Lemma~\ref{lem:segm}, the segment $I' = [a',b']$ of length $L$ with midpoint at $c' = \frac{\sum \ell_i c_i'}{L}$ covers the union $\bigcup I_i =\pi(\mathcal{K})$. Let us check that $I' \subset I$, which would be a contradiction, since $\pi(p) \in I'$, $\pi(p) \notin I$.

First, notice that $\displaystyle c_i' = \frac{a_i + b_i}{2} \ge \frac{s a_i + b_i}{1+s} = c_i$, hence
\[
a' = c' - \frac12 L \ge c - \frac12 L \ge c - \frac{1}{1+s} \frac{\sigma+1}{2} L = a.
\]

Second, $\displaystyle c_i' - c_i = \frac{a_i+b_i}{2} - \frac{sa_i + b_i}{1+s} = \frac{s-1}{s+1} \frac{\ell_i}{2}$, hence
\begin{multline*}
b' = c' + \frac12 L = c + \left(c'-c\right) + \frac12 L = c + \frac{s-1}{2\left(s+1\right)} \frac{\sum \ell_i^2}{L} + \frac12 L \le \\
\le  c + \frac{s-1}{2\left(s+1\right)} L + \frac12 L \le c + \frac{s}{1+s} \frac{\sigma+1}{2} L = b.
\end{multline*}
\end{proof}

\begin{lemma}[H.~Minkowski, J.~Radon]
\label{lem:asymm}
Let $K$ be a convex body in $\mathbb{R}^d$.Then $\sigma \le d$, where $\sigma$ denotes the parameter of asymmetry of $K$, defined above.
\end{lemma}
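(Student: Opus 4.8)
The plan is to exhibit an explicit admissible center, namely the centroid (center of mass) $c$ of $K$, and to show that it already witnesses the bound. Since $\sigma$ is defined as a minimum over all interior points $q$, it suffices to prove that $(K-c) \subseteq -d(K-c)$. Translating so that $c$ is the origin, this is the inclusion $K \subseteq -dK$, which in terms of the support function $h_K(u) = \max_{x\in K}\langle x,u\rangle$ amounts to the scalar inequality $h_K(u) \le d\, h_K(-u)$ for every direction $u$. Thus the whole statement reduces to a single one-directional estimate, and the role of the centroid is to turn the global symmetry condition into a barycentric one.

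Fix a unit vector $u$ and write $h = h_K(u)$, $h' = h_K(-u)$, so that the functional $\langle\cdot,u\rangle$ ranges over $[-h',h]$ on $K$. Let $A(t)$ denote the $(d-1)$-dimensional volume of the slice $K \cap \{\langle x,u\rangle = t\}$. By the Brunn--Minkowski inequality the function $g(t) = A(t)^{1/(d-1)}$ is concave on $[-h',h]$. The centroid being at the origin says precisely that $\int_{-h'}^{h} t\,A(t)\,dt = 0$, i.e. the barycenter of the measure $g(t)^{d-1}\,dt$ vanishes. I would then reduce the claim to the following one-dimensional assertion: for any nonnegative concave $g$ on $[0,1]$, the barycenter of $g^{d-1}\,dt$ is at least $\tfrac{1}{d+1}$, with equality for the linear profile. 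Rescaling the support to $[-h',h]$ turns this lower bound into $\tfrac{h-dh'}{d+1}$; combined with the vanishing barycenter it gives $\tfrac{h-dh'}{d+1} \le 0$, that is $h \le d\,h'$, as required.

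It remains to prove the one-dimensional inequality, which I regard as the technical heart of the argument; the extremal profile is linear ($g(t) = \const\cdot(1-t)$), corresponding to $K$ being a simplex and to equality $\sigma = d$. Normalizing the support to $[0,1]$, the engine is the pointwise bound $g\big(t_0 + (1-t_0)s\big) \ge (1-t_0)\,g(s)$, valid for $s,t_0 \in [0,1]$ because $t_0 + (1-t_0)s = t_0\cdot 1 + (1-t_0)\,s$ is a convex combination of $1$ and $s$ and $g$ is concave with $g(1)\ge 0$. Raising to the power $d-1$ and integrating in $s$ yields the tail estimate $\int_{t_0}^1 g^{d-1}\,dt \ge (1-t_0)^d \int_0^1 g^{d-1}\,dt$; integrating this once more over $t_0\in[0,1]$ (equivalently, writing the barycenter as $\int_0^1\!\big(\int_t^1 g^{d-1}\big)\,dt$ after integration by parts) produces the bound $\tfrac{1}{d+1}$. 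The main obstacles to watch are the genuinely geometric input, Brunn--Minkowski, which supplies the concavity of $g$, and the boundary behaviour of the slices: when $u$ is normal to a facet the extreme slice has positive area and $g$ need not vanish at an endpoint, but the concavity argument above never uses such vanishing, so it covers this case as well. This reconstructs the classical Minkowski--Radon bound, sharp exactly for simplices.
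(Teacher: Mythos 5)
Your argument is correct, and it certifies the same witness as the paper (the centroid), but the route is genuinely different. The paper avoids Brunn--Minkowski entirely: placing the support hyperplanes orthogonal to a fixed direction at $H_{-1}$ and $H_s$ and assuming $s>d$, it constructs a truncated cone $C$ with apex in $K\cap H_s$ and cross-section $C\cap H_0=K\cap H_0$; the centroid of such a cone divides $[-1,s]$ in ratio $1:d$ and hence has positive first coordinate, while convexity forces $C\setminus K$ into $\{x_1\le 0\}$ and $K\setminus C$ into $\{x_1\ge 0\}$, so a sign comparison of centroids pushes the centroid of $K$ strictly to the positive side --- contradicting that it sits at the origin. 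You instead invoke Brunn--Minkowski for the concavity of $A(t)^{1/(d-1)}$ and prove the sharp one-dimensional moment inequality (barycenter of $g^{d-1}\,dt$ at least $\tfrac{1}{d+1}$) via the tail estimate $\int_{t_0}^1 g^{d-1}\ge (1-t_0)^d\int_0^1 g^{d-1}$; I checked the substitution and the Fubini step, and the rescaling to $[-h',h]$ does yield $h\le d\,h'$, with the degenerate case $d=1$ still covered. What the paper's approach buys is elementarity --- no Brunn--Minkowski, only the known height of a cone's centroid and a decomposition of $K$ against $C$; what yours buys is a self-contained quantitative inequality that transparently exhibits the equality case (linear $g$, i.e., cones/simplices) and localizes all the geometry into a single classical input.
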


For the sake of completeness we provide a proof here.

\begin{proof}

Suppose the origin coincides with the center of mass $g = \int\limits_K x \; dx / \int\limits_K dx$. We show that $K^\circ \subset -d K^\circ$.

Consider two parallel support hyperplanes orthogonal to one of the coordinate axes $Ox_1$. We use the notation $H_t = \{x = (x_1,\ldots,x_d): x_1 = t\}$ for hypeplanes orthogonal to this axis. Without loss of generality, these support hyperplanes are $H_{-1}$ and $H_s$ for some $s \ge 1$. We need to prove $s \le d$.

Assume that $s > d$. Consider a cone $C$ defined as follows: its vertex is chosen arbitrarily from $K \cap H_s$; its section $C \cap H_0 = K \cap H_0$; the cone is truncated by $H_{-1}$.
Since $C$ is a $d$-dimensional cone, the $x_1$-coordinate of its center of mass divides the segment $[-1, s]$ in ratio $1:d$. Therefore, the center of mass has positive $x_1$-coordinate.
It follows from convexity of $K$ that $C \setminus K$ lies (non-strictly) between $H_{-1}$ and $H_0$, hence the center of mass of $C \setminus K$ has non-positive $x_1$-coordinate. Similarly, $K \setminus C$ lies (non-strictly) between $H_0$ and~$H_s$, hence its center of mass has non-negative $x_1$-coordinate. Thus, the center of mass of $K = (C \setminus (C \setminus K)) \cup (K \setminus C)$ (see Figure~\ref{pic:cone}) must have positive $x_1$-coordinate, which is a contradiction.

\begin{figure}[h]
\centering
\includegraphics{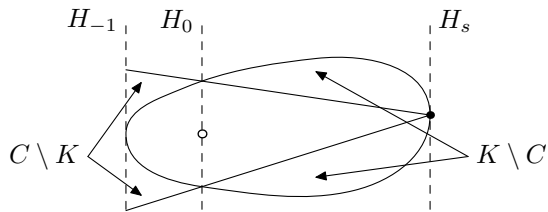}
\caption{Illustration of the proof of Lemma~\ref{lem:asymm}}
\label{pic:cone}
\end{figure}


%
%
%

\end{proof}

\begin{corollary}
\label{cor:nonsymmstrong}
The factor $d$ in Theorem~\ref{thm:nonsymmweak} can be improved to $\frac{d+1}{2}$.
\end{corollary}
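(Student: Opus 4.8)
The plan is to combine Theorem~\ref{thm:nonsymm} with the Minkowski--Radon bound of Lemma~\ref{lem:asymm}. Theorem~\ref{thm:nonsymm} already provides a covering of any non-separable family $\mathcal{K} = \{o_i + \tau_i K\}$ by a translate of $\frac{\sigma+1}{2}\left(\sum \tau_i\right) K$, where $\sigma$ is the parameter of asymmetry of $K$. Since Lemma~\ref{lem:asymm} guarantees $\sigma \le d$ for every convex body in $\mathbb{R}^d$, I immediately obtain $\frac{\sigma+1}{2} \le \frac{d+1}{2}$, so it only remains to pass from a covering homothet with coefficient $\frac{\sigma+1}{2}\left(\sum\tau_i\right)$ to one with the (possibly larger) coefficient $\frac{d+1}{2}\left(\sum\tau_i\right)$.

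The one point to check is this monotonicity step. Recall that in the proof of Theorem~\ref{thm:nonsymm} the origin is shifted so that $0 \in \inte K$ (indeed, the parameter $\sigma$ is attained at an interior point of $K$). For such a placement of the origin and any $0 < t \le T$ one has $tK \subseteq TK$, because $K$ is star-shaped with respect to the origin; consequently $tK + o \subseteq TK + o$ for every translation vector $o$. Applying this with $t = \frac{\sigma+1}{2}\left(\sum\tau_i\right)$ and $T = \frac{d+1}{2}\left(\sum\tau_i\right)$ shows that the same translate that works for the smaller homothet also covers $\mathcal{K}$ once enlarged to $\frac{d+1}{2}\left(\sum\tau_i\right)K$, which is exactly the desired statement.

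I do not expect any genuine obstacle here: the whole content is carried by Theorem~\ref{thm:nonsymm} and Lemma~\ref{lem:asymm}, and the only mild subtlety is ensuring that the origin lies in the interior of $K$, so that enlarging the homothety coefficient cannot destroy the covering. As a sanity check, for centrally symmetric $K$ one has $\sigma = 1$ and the factor collapses to $1$, recovering Theorem~\ref{thm:symm}; in the general case the extremal value $\sigma = d$ (attained by the simplex) yields precisely the claimed factor $\frac{d+1}{2}$, which is consistent with the optimality discussion promised in Section~\ref{sect:simplex}.
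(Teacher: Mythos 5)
Your proposal is correct and is essentially the paper's own (first) proof: apply Theorem~\ref{thm:nonsymm} and then bound $\sigma\le d$ via Lemma~\ref{lem:asymm}, with the monotonicity step $tK\subseteq TK$ (valid since the origin is placed inside $K$) justifying the passage to the larger factor. The paper also sketches an alternative argument avoiding Lemma~\ref{lem:asymm}, but your route matches its primary one.
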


\begin{proof}
The result follows from Theorem~\ref{thm:nonsymm} and Lemma~\ref{lem:asymm}.

An alternative proof of this corollary that avoids Lemma~\ref{lem:asymm} is as follows. We use the notation of Theorem~\ref{thm:nonsymmweak}. Consider the smallest homothet $\tau K$, $\tau > 0$, that can cover $\mathcal{K}$ (after a translation to $\tau K + t$, $t \in \mathbb{R}^d$). Since it is the smallest, its boundary touches $\partial \conv \bigcup \mathcal{K}$ at some points $q_0$, $\ldots$, $q_m$ ($m \le d$) such that the corresponding support hyperplanes $H_0$, $\ldots$, $H_m$ bound a \emph{nearly bounded} set $S$, i.e., a set that can be placed between two parallel hyperplanes.

Circumscribe all the bodies from the family $\mathcal{K}$ by the smallest homothets of $S$ and apply Theorem~\ref{thm:nonsymm} for them (note that if $m<d$ then $S$ is unbounded, but that does not ruin our argument). Since $S$ is a cylinder based on an $m$-dimensional simplex, its parameter of asymmetry equals $m \le d$, and we are done.




\end{proof}

\begin{remark}
\label{rem:nonsymmbest}
Up to this moment the best possible factor for non-symmetric case is unknown. Bezdek and L\'angi~\cite{bezdek2016non} give a sequence of examples in $\mathbb{R}^d$ showing that it is impossible to obtain a factor less than $\frac23 + \frac{2}{3\sqrt{3}}$ $(> 1)$ for any $d \ge 2$.
\end{remark}

\section{A sharp Goodmans-type result for simplices}
\label{sect:simplex}

Consider the case when $K \subset \mathbb{R}^d$ is a simplex.
In this section we are only interested in separating hyperplanes parallel to a facet of $K$.

\begin{theorem}
\label{thm:simplex}
Let $\mathcal{K}$ be a family of positive homothetic copies of a simplex $K \subset \mathbb{R}^d$ with homothety coefficients $\tau_1$, $\ldots$, $\tau_n > 0$. Suppose any hyperplane $H$ (parallel to a facet of $K$) intersecting $\conv \bigcup \mathcal{K}$ intersects a member of $\mathcal{K}$.
Then it is possible to cover $\bigcup \mathcal{K}$ by a translate of $\frac{d+1}{2}\left(\sum \tau_i\right)K$.
Moreover, factor $\frac{d+1}{2}$ cannot be improved.
\end{theorem}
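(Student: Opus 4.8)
The plan is to prove the two assertions of Theorem~\ref{thm:simplex} separately: first the covering bound, then its sharpness.

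For the covering bound I would rerun the proof of Theorem~\ref{thm:nonsymm}, noting two points. First, a simplex has parameter of asymmetry exactly $\sigma=d$: Lemma~\ref{lem:asymm} gives $\sigma\le d$, while projecting $K$ (with the origin at its centroid) onto the outer normal $n_j$ of any facet sends the $d$ vertices of that facet to one endpoint and the opposite vertex to the other, dividing $\pi(K)$ in ratio $1:d$, so $\sigma\ge d$. Hence $\frac{\sigma+1}{2}=\frac{d+1}{2}$, and Theorem~\ref{thm:nonsymm} would already give the bound under full non-separability. Second---and this is the only place the weaker hypothesis enters---if the candidate homothet $\frac{d+1}{2}\left(\sum\tau_i\right)K+o$, with $o=\frac{\sum\tau_i o_i}{\sum\tau_i}$, fails to cover some $p\in\conv\bigcup\mathcal{K}$, then the separating hyperplane may be taken parallel to a facet of $K$. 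Indeed, that homothet is itself a simplex, i.e.\ the intersection of its $d+1$ facet half-spaces; a point $p$ outside it violates one of these half-spaces, so the corresponding facet hyperplane, whose normal $n_j$ is a facet normal of $K$, separates $p$ from the homothet.

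Because $n_j$ is a facet normal, the hypothesis of Theorem~\ref{thm:simplex} forces $\pi(\bigcup\mathcal{K})$ to be a segment, where $\pi$ is the orthogonal projection onto the $n_j$-direction. I would then invoke Lemma~\ref{lem:segm} together with the one-dimensional estimate from the proof of Theorem~\ref{thm:nonsymm} (with asymmetry ratio $s=\sigma=d$) to conclude that $\pi(p)$ lies in $\pi\!\left(\frac{d+1}{2}\left(\sum\tau_i\right)K+o\right)$, contradicting the choice of $p$. This settles the covering half with no genuinely new computation.

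For sharpness I would pass to the support-function description of the smallest covering homothet. With $n_0,\dots,n_d$ the unit outer facet normals of the regular simplex $K$ one has $\sum_j n_j=0$, and the least coefficient $\tau^*$ with $\tau^*K$ containing a convex body $C$ (after translation) is $\tau^*=\frac{\sum_j h_C(n_j)}{\sum_j h_K(n_j)}$. For $C=\conv\bigcup\mathcal{K}$ one has $h_C(n_j)=\max_i\big(\langle o_i,n_j\rangle+\tau_i h_K(n_j)\big)$, and the estimate of the covering half is precisely $h_C(n_j)\le\langle o,n_j\rangle+\frac{d+1}{2}\left(\sum\tau_i\right)h_K(n_j)$ with a strictly positive gap proportional to $\sum_i(\ell_i^{(j)})^2$. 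Summing over $j$ and using $\sum_j\langle o,n_j\rangle=0$ yields $\tau^*<\frac{d+1}{2}\sum\tau_i$ for every nontrivial family, so the factor is a supremum that can only be approached.

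To approach it, I would build, for each $n_j$, a \emph{near-tiling}: many tiny homothets whose projections onto $n_j$ partition a common segment with overlaps tending to $0$. The one-dimensional model behind Lemma~\ref{lem:segm} shows that a tiling by $N$ equal pieces gives $h_C(n_j)-\langle o,n_j\rangle=\frac{d+1}{2}\left(\sum\tau_i\right)h_K(n_j)-O(1/N)$, so every facet of $\frac{d+1}{2}\left(\sum\tau_i\right)K+o$ is asymptotically touched; the limiting hull is then a centrally symmetric polytope with all $h_C(\pm n_j)$ equal (a regular hexagon when $d=2$), and $\tau^*\to\frac{d+1}{2}\sum\tau_i$. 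I expect the real difficulty to be the \emph{simultaneity}: one must place a single family of small homothets whose projections nearly partition a segment in all $d+1$ facet directions at once, while checking that the union remains non-separable (gapless) in each of them. This combinatorial arrangement, rather than any inequality, is the crux; everything else reduces to bookkeeping already contained in Theorem~\ref{thm:nonsymm}.
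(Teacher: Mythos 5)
The covering half of your proposal is correct and is essentially the paper's own argument: since the candidate homothet $\frac{d+1}{2}\left(\sum \tau_i\right)K + o$ is itself a simplex, any uncovered point of $\conv \bigcup \mathcal{K}$ is cut off by a hyperplane parallel to one of its facets, and in the projection onto that facet normal (where the centroid divides the image of $K$ in ratio $1:d$) the weakened hypothesis plus Lemma~\ref{lem:segm} and the one-dimensional computation of Theorem~\ref{thm:nonsymm} give the contradiction. (A small inaccuracy that does not affect this half: your argument for $\sigma \ge d$ only bounds the asymmetry of $K$ relative to its centroid, not the minimum over all interior points $q$; but the proof only needs the projection ratio $s=d$ and the factor $\frac{d+1}{2}$, so nothing is lost.)

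The sharpness half, however, has a genuine gap: you never construct the families that make the factor $\frac{d+1}{2}$ unavoidable, and you say so yourself --- you call the simultaneous near-tiling in all $d+1$ facet directions \emph{the crux} and leave it as an expectation. Identifying that difficulty is a restatement of the problem, not a solution; and your strict inequality $\tau^* < \frac{d+1}{2}\sum\tau_i$ for every finite family only shows the constant can at best be approached --- by itself it is compatible with the optimal constant being much smaller. The paper resolves exactly this point by an explicit construction: take $K = \{x : x_i \ge 0,\ \sum x_i \le \frac{d(d+1)}{2}N + 1\}$, subdivide it by the integer hyperplanes $\{x_i = t\}$ and $\{\sum x_i = t\}$, and keep the $dN+1$ smallest cells whose coordinate vectors are $\bigl(i,\ N+i,\ \ldots,\ (d-1)N+i\bigr) \bmod (dN+1)$ for $i = 0, \ldots, dN$. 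Each coordinate then sweeps all of $\{0,1,\ldots,dN\}$ while the coordinate sums stay at most $\frac{d(d+1)}{2}N$, which yields non-separability in all $d+1$ facet directions; the chosen cells touch every facet of $K$, so $K$ is the smallest covering homothet, while $\sum \tau_i = \frac{dN+1}{\frac{d(d+1)}{2}N+1} \to \frac{2}{d+1}$ as $N \to \infty$. Without this (or an equivalent) construction, the ``moreover'' clause of the theorem remains unproved.
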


\begin{proof}
A proof of possibility to cover follows the same lines as (and is even simpler than) the proof of Theorem~\ref{thm:nonsymm}. Let $K$ have its center of mass at the origin. For a family $\mathcal{K} = \{o_i + \tau_i K\}$, consider a homothet $\frac{d+1}{2} \left(\sum \tau_i\right) K + o$ with center $o = \frac{\sum \tau_i o_i}{\sum \tau_i}$.
Assuming $\frac{d+1}{2}\left(\sum \tau_i\right) K + o$ does not cover $\mathcal{K}$, we find a hyperplane $H$ (strictly) separating a point $p \in \conv \bigcup \mathcal{K} \setminus \left(\frac{d+1}{2}\left(\sum \tau_i\right) K + o\right)$ from $\left(\frac{d+1}{2}\left(\sum \tau_i\right) K + o\right)$.
Note that $H$ can be chosen among the hyperplanes spanned by the facets of $\left(\frac{d+1}{2}\left(\sum \tau_i\right) K + o\right)$, so $H$ is parallel to one of them.

After projecting everything along $H$ onto the direction orthogonal to $H$, we repeat the same argument as before and show that (in the notation from Theorem~\ref{thm:nonsymm})
\[
a' = c' - \frac12 L \ge c - \frac12 L = a,
\]
which contradicts our assumption.

Next, we construct an example showing that factor $\frac{d+1}{2}$ cannot be improved.

Consider a simplex \[K = \{x = (x_1, \ldots, x_d) \in \mathbb{R}^d: x_i \ge 0,\,\,\, \sum\limits_{i=1}^d x_i \le \frac{d(d+1)}{2} N + 1\},\] where $N$ is an arbitrary large integer. Section it with all hyperplanes of the form $\{x_i = t\}$ or of the form $\sum\limits_{i=1}^d x_i = t$ (for $t \in \mathbb{Z}$). Consider all the smallest simplices generated by these cuts and positively homothetic to $K$. We use coordinates
\[
\begin{pmatrix} b_1 \\ b_2 \\ \vdots \\ b_n \end{pmatrix}, \quad 0 \le b_i \in \mathbb{Z}, \quad \sum\limits_{i=1}^d b_i \le \frac{d(d+1)}{2} N,
\]
to denote the simplex lying in the hypercube $\{b_i \le x_i \le b_i+1, i = 1, \ldots, d\}$.

For $d=2$ (see Figure~\ref{pic:triangle}) we compose $\mathcal{K}$ of the simplices with the following coordinates:
\[
\begin{pmatrix}
0 \\
N
\end{pmatrix},
\begin{pmatrix}
1 \\
N+1
\end{pmatrix}, \ldots,
\begin{pmatrix}
N \\
2N
\end{pmatrix},
\begin{pmatrix}
N+1 \\
0
\end{pmatrix}, \ldots,
\begin{pmatrix}
2N \\
N-1
\end{pmatrix}.
\]

\begin{figure}
\centering
\includegraphics{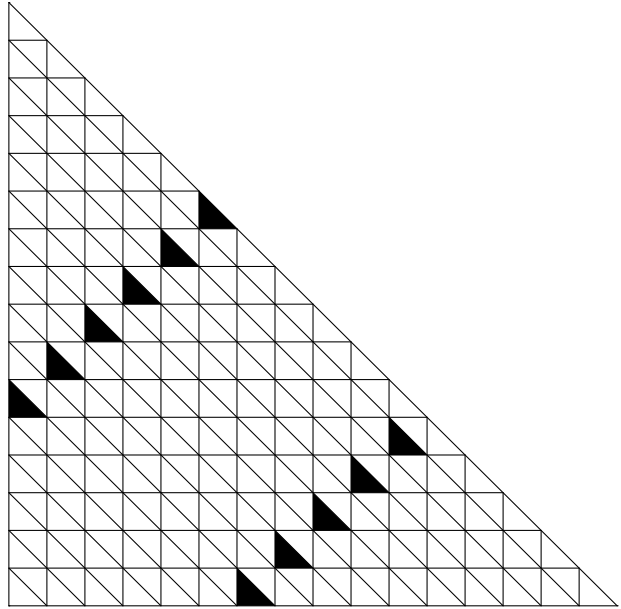}
\caption{Example for $d=2$ and $N=5$}
\label{pic:triangle}
\end{figure}

For $d=3$:
\[
\begin{pmatrix}
0 \\
N \\
2N
\end{pmatrix},
\begin{pmatrix}
1 \\
N+1 \\
2N+1
\end{pmatrix}, \ldots,
\begin{pmatrix}
N \\
2N \\
3N
\end{pmatrix},
\begin{pmatrix}
N+1 \\
2N+1 \\
0
\end{pmatrix}, \ldots,
\begin{pmatrix}
2N \\
3N \\
N-1
\end{pmatrix},
\begin{pmatrix}
2N+1 \\
0 \\
N
\end{pmatrix}, \ldots,
\begin{pmatrix}
3N \\
N-1 \\
2N-1
\end{pmatrix}.
\]

For general $d$:
\[
\begin{pmatrix}
0 \\
N \\
2N \\
\vdots \\
(d-1)N
\end{pmatrix},
\begin{pmatrix}
1 \\
N+1 \\
2N+1 \\
\vdots \\
(d-1)N+1
\end{pmatrix}, \ldots,
\begin{pmatrix}
i \pmod{dN+1} \\
N + i \pmod{dN+1}\\
2N + i \pmod{dN+1}\\
\vdots \\
(d-1)N + i \pmod{dN+1}
\end{pmatrix}, \ldots,
\begin{pmatrix}
dN \\
N-1 \\
2N-1 \\
\vdots \\
(d-1)N-1
\end{pmatrix}.
\]

It is rather straightforward to check that each $b_i$ ranges over the set $\{0, 1, \ldots, dN\}$, and their sum is not greater than $\frac{d(d+1)}{2} N$. Therefore, the chosen family $\mathcal{K}$ is indeed non-separable by hyperplanes parallel to the facets of $K$. Moreover, the chosen simplices touch all the facets of $K$, so $K$ is the smallest simplex covering $\mathcal{K}$. Finally, we note that any one-dimensional parameter of $K$ (say, its diameter) is $\frac{{d(d+1)} N}{{2}(dN+1)}$ times greater than the sum of the corresponding parameters of the elements of $\mathcal{K}$, and this ratio tends to $\frac{d+1}{2}$ as $N \to \infty$.

\end{proof}

\section{A ``dual'' version of Goodmans' theorem}
\label{sect:anti}

\begin{lemma}
\label{lem:antisegm}
Let $I_1, \ldots, I_n \subset \mathbb{R}$ be segments of lengths $\ell_1, \ldots, \ell_n$ with midpoints $c_1, \ldots, c_n$. Assume every point on the line belongs to at most $k$ of the interiors of the $I_i$. Then the segment $I$ of length $\frac{1}{k}\sum \ell_i$ with midpoint at the center of mass $c = \frac{\sum \ell_i c_i}{\sum \ell_i}$ lies in $\conv \bigcup I_i$.
\end{lemma}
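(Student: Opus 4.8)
The plan is to reinterpret the weighted family of segments as a single coverage density on the line and then to apply a rearrangement (bathtub) inequality. Write $I_i = [a_i, b_i]$, put $L = \sum \ell_i$, and let $[A, B] = \conv \bigcup I_i$ with $A = \min_i a_i$ and $B = \max_i b_i$. By applying the reflection $x \mapsto -x$ if necessary, it suffices to establish the single inequality $c \le B - \frac{L}{2k}$; the symmetric bound $c \ge A + \frac{L}{2k}$ then follows, and together they say precisely that $I = \left[c - \frac{L}{2k},\, c + \frac{L}{2k}\right] \subseteq [A,B] = \conv \bigcup I_i$.

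The key observation I would use is an identity for the center of mass. Introduce the coverage function $f(x) = \#\{i : x \in \inte I_i\}$. Since $c_i$ is the midpoint of $I_i$, one has $\ell_i c_i = \int_{I_i} x\,dx$, and summing over $i$ gives
\[
\sum \ell_i c_i = \int_{\mathbb{R}} x\, f(x)\,dx, \qquad \sum \ell_i = \int_{\mathbb{R}} f(x)\,dx = L.
\]
Hence $c = \frac{\int x f\,dx}{\int f\,dx}$ is exactly the barycenter of the density $f$. The hypothesis of the lemma translates into the three constraints $0 \le f \le k$, $\operatorname{supp} f \subseteq [A, B]$, and $\int f = L$.

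It then remains to bound this barycenter away from the endpoint $B$. My plan is to do this by the bathtub principle: among all densities $f$ with $0 \le f \le k$, total mass $\int f = L$, and support in $(-\infty, B]$, the functional $\int_{\mathbb{R}} (B - x) f(x)\,dx$ (whose integrand is nonnegative on the support) is minimized by pushing all the mass as far right as possible, namely by $f^\star = k\,\mathbf{1}_{[B - L/k,\, B]}$. A direct computation then gives
\[
\int_{\mathbb{R}} (B - x) f(x)\,dx \ \ge\ \int_{\mathbb{R}} (B - x) f^\star(x)\,dx \ =\ k\int_{B - L/k}^{B} (B - x)\,dx \ =\ \frac{L^2}{2k}.
\]
Rewriting the left side as $LB - \int x f\,dx = L\left(B - c\right)$ yields $c \le B - \frac{L}{2k}$, which is what we wanted.

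The one step that requires care is the extremal claim in the previous paragraph, that the right-pushed indicator minimizes the first moment $\int (B-x) f$; I expect this to be the main (though routine) obstacle. It is the standard bathtub/rearrangement inequality and can be proved by an elementary exchange argument: if $f$ places positive mass at a point to the left of a point where $f < k$, transferring that mass to the right strictly decreases $\int (B-x) f$, so any minimizer must have the stated form. Note that $f^\star$ is automatically admissible, since $\int f = L \le k\,(B - A)$ forces $B - L/k \ge A$; and this same inequality explains why the sharp configuration is $k$ segments stacked flush against an endpoint, matching equality in the bound.
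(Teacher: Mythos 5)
Your proof is correct, and it rests on the same underlying object as the paper's: the coverage multiplicity function of the family, whose barycenter is $c$ and which is bounded by $k$ with total mass $L$. Both arguments reduce the lemma to showing that such a density keeps its barycenter at distance at least $L/(2k)$ from each end of its supporting interval. Where you genuinely diverge is in how that extremal fact is proved. The paper subdivides the line at all segment endpoints, producing finitely many pieces of lengths $\ell_i$ with integer multiplicities $k_i\le k$, and verifies the resulting inequality $k\bigl(\sum_i k_i\ell_i^2+2\sum_{i<j}k_j\ell_i\ell_j\bigr)\ge\bigl(\sum_i k_i\ell_i\bigr)^2$ term by term against the expansion of the square, using only $k_i\le k$. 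You instead pass to the continuous density $f$ and invoke the bathtub principle for the first moment $\int(B-x)f$. Your route is more conceptual: it exhibits the extremal density $k\,\mathbf{1}_{[B-L/k,\,B]}$ explicitly, which also makes the sharpness of the constant (the $k$ stacked translates in the remark following Theorem~\ref{thm:anti}) transparent. The one step you defer, the bathtub inequality itself, is standard, and your exchange-argument sketch is an acceptable way to close it; if you want it fully self-contained, a layer-cake computation does it in two lines: $\int(B-x)f\,dx=\int_0^\infty\bigl(\int_{-\infty}^{B-t}f\bigr)\,dt\ge\int_0^{L/k}(L-kt)\,dt=L^2/(2k)$, since $\int_{B-t}^{B}f\le kt$. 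The paper's discrete expansion is longer to write but entirely elementary and avoids any appeal to rearrangement; both are valid.
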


\begin{proof}
Mark all the segment endpoints and subdivide all the segments by the marked points. Next, put the origin at the leftmost marked point and numerate the segments between the marked points from left to right. We say that the $i$-th segment is of multiplicity $0 \le k_i \le k$ if it is covered $k_i$ times. We keep the notation $I_i$ for the new segments with multiplicities, $c_i$ for their midpoints, and~$\ell_i$ for their lengths. Note that the value $\frac{\sum \ell_i c_i}{\sum \ell_i}$ is preserved after this change of notation: it is the coordinate of the center of mass of the segments regarded as solid one-dimensional bodies of uniform density.

Note that $c_i = \ell_1 + \ldots + \ell_{i-1} + \frac12 \ell_i$. We prove that $c = \frac {\sum k_i\ell_i c_i}{\sum k_i\ell_i} \ge \frac{\sum k_i \ell_i}{2k}$ (this would mean that the left endpoint of $I$ is contained in $\conv \bigcup I_i$; for the right endpoint everything is similar).

The inequality in question
\[
2c \sum_i k_i \ell_i = k_1 \ell_1 \cdot \ell_1 + k_2 \ell_2 \cdot \left(2\ell_1 + \ell_2\right) + k_2 \ell_2 \cdot \left(2\ell_1 + 2\ell_2 + \ell_3\right) + \ldots \stackrel{?} \ge \frac{1}{k} \left( \sum_i k_i\ell_i \right)^2
\]
is equivalent to
\[
k \left( \sum_i k_i\ell_i^2 + 2 \sum_{i<j} k_j\ell_i\ell_j \right)\stackrel{?} \ge \left( \sum_i k_i\ell_i \right)^2,
\]
which is true, since $k \ge k_i$.

\end{proof}


\begin{theorem}
\label{thm:anti}
Let $k$ be a positive integer, and $\mathcal{K}$ be a family of positive homothetic copies (with homothety coefficients $\tau_1, \ldots, \tau_n > 0$) of a centrally-symmetric convex body $K \subset \mathbb{R}^d$. Suppose any hyperplane intersects at most $k$ interiors of the homothets. Then it is possible to put a translate of $\frac{1}{k}\left(\sum \tau_i\right) K$ into their convex hull.
\end{theorem}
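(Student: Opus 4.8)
The plan is to dualize Petrov's direct argument for Theorem~\ref{thm:symm}, using Lemma~\ref{lem:antisegm} in place of Lemma~\ref{lem:segm}. I place the center of symmetry of $K$ at the origin, so that $K=-K$ and each homothet $o_i + \tau_i K$ is symmetric about $o_i$. Writing $\mathcal{K} = \{o_i + \tau_i K\}$, I take the candidate body to be $\frac{1}{k}\left(\sum \tau_i\right) K + o$ with center at the center of mass $o = \frac{\sum \tau_i o_i}{\sum \tau_i}$, and argue by contradiction: if this translate is not contained in $\conv \bigcup \mathcal{K}$, then, both sets being convex, there is a point $p$ of the translate lying outside $\conv \bigcup \mathcal{K}$, and hence a hyperplane $H$ strictly separating $p$ from $\conv \bigcup \mathcal{K}$.

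Next I would project everything along $H$ onto the orthogonal line, calling this projection $\pi$ and identifying its image with $\mathbb{R}$. Set $I_i = \pi(o_i + \tau_i K)$; since $K$ is centrally symmetric, each $I_i$ is a segment with midpoint $c_i = \pi(o_i)$ and length $\ell_i = \tau_i \cdot \mathrm{length}\,\pi(K)$, so the $\ell_i$ are proportional to the $\tau_i$. The key point is to transfer the hypothesis: a hyperplane parallel to $H$ at position $t$ meets the interior of $o_i + \tau_i K$ exactly when $t$ lies in the interior of $I_i$, so the assumption that every hyperplane meets at most $k$ interiors of the homothets forces every point of the line to lie in at most $k$ of the interiors of the $I_i$. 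Hence Lemma~\ref{lem:antisegm} applies.

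I would then compute that the center of mass of the projected segments is $\frac{\sum \ell_i c_i}{\sum \ell_i} = \frac{\sum \tau_i \pi(o_i)}{\sum \tau_i} = \pi(o)$, and that the segment $I$ produced by the lemma, of length $\frac{1}{k}\sum \ell_i = \mathrm{length}\,\pi\!\left(\frac1k\left(\sum \tau_i\right)K\right)$ and midpoint $\pi(o)$, is precisely $\pi\!\left(\frac1k\left(\sum \tau_i\right)K + o\right)$ (again using central symmetry, so that this projection is a segment centered at $\pi(o)$). Lemma~\ref{lem:antisegm} gives $I \subset \conv \bigcup I_i = \pi\left(\conv \bigcup \mathcal{K}\right)$. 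Since $p$ belongs to the translate, $\pi(p) \in I \subset \pi\left(\conv \bigcup \mathcal{K}\right)$, contradicting the strict separation, which forces $\pi(p) \notin \pi\left(\conv \bigcup \mathcal{K}\right)$.

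The argument is essentially a formal dualization, so I expect no serious obstacle; the one place demanding care is the bookkeeping that identifies the lemma's output segment with $\pi\!\left(\frac1k\left(\sum \tau_i\right)K + o\right)$ and matches the weighted center of mass $\frac{\sum \ell_i c_i}{\sum \ell_i}$ with $\pi(o)$. Both of these rely on the central symmetry of $K$, which guarantees that projections of the homothets are segments centered at the projected centers, exactly as in Petrov's proof; without symmetry an asymmetry factor analogous to $\frac{\sigma+1}{2}$ would intervene.
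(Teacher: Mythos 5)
Your proposal is correct and is essentially the paper's own proof: the paper likewise places the candidate homothet $\frac1k\left(\sum\tau_i\right)K+o$ at the weighted center of mass, separates a point $p$ of it from $\conv\bigcup\mathcal{K}$ by a hyperplane $H$, projects orthogonally to $H$, and invokes Lemma~\ref{lem:antisegm}. The paper states this in three lines; your additional bookkeeping (transferring the ``at most $k$ interiors'' hypothesis to the line and identifying the lemma's output segment with $\pi\bigl(\frac1k\left(\sum\tau_i\right)K+o\bigr)$ via central symmetry) is exactly the detail the paper leaves implicit.
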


\begin{proof}
As usual, for a family $\mathcal{K} = \{o_i + \tau_i K\}$, consider a homothet \mbox{$\frac{1}{k}\left(\sum \tau_i\right) K + o$} with center $o = \frac{\sum \tau_i o_i}{\sum \tau_i}$. Assume $\frac{1}{k} \left(\sum \tau_i\right) K + o$ does not fit into $\conv \bigcup \mathcal{K}$, then there exists a hyperplane $H$ separating a point $p \in \frac{1}{k} \left(\sum \tau_i\right) K + o$ from $\conv \bigcup \mathcal{K}$. After projecting onto the direction orthogonal to $H$, we use Lemma~\ref{lem:antisegm} to obtain a contradiction.
\end{proof}

\begin{remark}
The estimate in Theorem~\ref{thm:anti} is sharp for any $k$, as can be seen from the example of $k$ translates of $K$ lying along the line so that consecutive translates touch.
\end{remark}

\subsection*{Acknowledgements.}
The authors are grateful to Rom Pinchasi and Alexandr Polyanskii for fruitful discussions. Also the authors thank Roman Karasev, Kevin Kaczorowski, and the anonymous referees for careful reading and suggested revisions.

The research of the first author is supported by People Programme (Marie Curie Actions) of the European Union's Seventh Framework Programme (FP7/2007-2013) under REA grant agreement n$^\circ$[291734].
The research of the second author is supported by he Russian Foundation for Basic Research grant 15-01-99563 A and  grant 15-31-20403 (mol\_a\_ved).


%
%

\begin{thebibliography}{1}
\providecommand{\url}[1]{{#1}}
\providecommand{\urlprefix}{URL }
\expandafter\ifx\csname urlstyle\endcsname\relax
  \providecommand{\doi}[1]{DOI~\discretionary{}{}{}#1}\else
  \providecommand{\doi}{DOI~\discretionary{}{}{}\begingroup
  \urlstyle{rm}\Url}\fi

\bibitem{piter-olimpiadi2000-2002}
Berlov, S., Ivanov, S., Karpov, D., Kokhas', K., Petrov, F., Khrabrov, A.:
  Problems from St. Petersburg School Olympiad on Mathematics 2000-2002.
\newblock Nevskiy Dialekt, St. Petersburg (2006)

\bibitem{bezdek2016non}
Bezdek, K., L\'angi, Z.: On non-separable families of positive homothetic
  convex bodies.
\newblock arXiv preprint arXiv:1602.01020  (2016)

\bibitem{bezdek2015packing}
Bezdek, K., Litvak, A.: Packing convex bodies by cylinders.
\newblock arXiv preprint arXiv:1507.05115  (2015)

\bibitem{goodman1945circle}
Goodman, A., Goodman, R.: A circle covering theorem.
\newblock The American Mathematical Monthly \textbf{52}(9), 494--498 (1945)

\bibitem{grunbaum1963measures}
Gr{\"u}nbaum, B.: Measures of symmetry for convex sets.
\newblock In: Proc. {S}ympos. {P}ure {M}ath., {V}ol. {VII}, pp. 233--270. Amer.
  Math. Soc., Providence, R.I. (1963)

\bibitem{hadwiger1947nonseparable}
Hadwiger, H.: Nonseparable convex systems.
\newblock The American Mathematical Monthly \textbf{54}(10), 583--585 (1947)

\end{thebibliography}

\vskip 1cm

\end{document}